\documentclass[a4paper,reqno]{amsart}
\usepackage{amssymb}
\usepackage{hyperref}
\usepackage{graphicx}
\usepackage{subcaption}
\usepackage{latexsym}
\usepackage{amsmath,amsthm,mathtools}
\usepackage{physics}
\usepackage{euscript}
\usepackage{graphics,color}
\usepackage[all]{xy}
\usepackage{comment}
\usepackage[margin=3cm]{geometry}
\usepackage{MnSymbol} 


\newcommand{\be}{\begin{equation}}
\newcommand{\map}{\zeta}
\newcommand{\id}{I}

\newcommand{\ee}{\end{equation}}
\newcommand{\ba}{\begin{eqnarray}}
\newcommand{\ea}{\end{eqnarray}}
\newcommand{\baa}{\begin{eqnarray*}}
\newcommand{\eaa}{\end{eqnarray*}}
\newcommand{\bb}{}

\newcommand{\A}{\mathcal{A}}
\newcommand{\V}{\mathcal{V}}
\newcommand{\U}{\mathcal{U}}

\newcommand{\Vq}{V_q}

\newcommand{\tends}{\rightarrow}

\newcounter{my}
\newcommand{\he}%
   {\stepcounter{equation}\setcounter{my}%
   {\value{equation}}\setcounter{equation}0%
   }%
\newcommand{\she}%
   {\setcounter{equation}{\value{my}}%
    }%

\ExplSyntaxOn

\NewDocumentCommand{\pFq}{O{}mmmmm}
 {
  \group_begin:
  \keys_set:nn { hypergeometric } { #1 }
  \hypergeometric_print:nnnnn { #2 } { #3 } { #4 } { #5 } { #6 }
  \group_end:
 }
\NewDocumentCommand{\hypergeometricsetup}{m}
 {
  \keys_set:nn { hypergeometric } { #1 }
 }

\tl_new:N \l_hypergeometric_divider_tl
\tl_new:N \l_hypergeometric_left_tl
\tl_new:N \l_hypergeometric_right_tl

\keys_define:nn { hypergeometric }
 {
  symbol .tl_set:N = \l_hypergeometric_symbol_tl,
  symbol .initial:n = \phi,
  separator .tl_set:N = \l_hypergeometric_separator_tl,
  separator .initial:n = {},
  skip .tl_set:N = \l_hypergeometric_skip_tl,
  skip .initial:n = 8,
  divider .choice:,
  divider/semicolon .code:n = \tl_set:Nn \l_hypergeometric_divider_tl { \;; },
  divider/bar .code:n = \tl_set:Nn \l_hypergeometric_divider_tl { \;\middle|\; },
  divider .initial:n = bar,
  fences .choice:,
  fences/brack .code:n = 
   \tl_set:Nn \l_hypergeometric_left_tl {[}
   \tl_set:Nn \l_hypergeometric_right_tl {]},
  fences/parens .code:n = 
   \tl_set:Nn \l_hypergeometric_left_tl {(}
   \tl_set:Nn \l_hypergeometric_right_tl {)},
  fences .initial:n = brack,
 }

\cs_new_protected:Nn \hypergeometric_print:nnnnn
 {
  {} \sb {#1} \l_hypergeometric_symbol_tl \sb { #2 }
  \left\l_hypergeometric_left_tl
  \genfrac .. 
           {0pt} 
           {} 
           { \__hypergeometric_process:n { #3 } } 
           { \__hypergeometric_process:n { #4 } } 
  \l_hypergeometric_divider_tl
  #5
  \right\l_hypergeometric_right_tl
 }

\cs_new_protected:Nn \__hypergeometric_process:n
 {
  \clist_use:nn { #1 }
   {
    {\l_hypergeometric_separator_tl}
    \mspace { \l_hypergeometric_skip_tl mu }
   }
 }

\ExplSyntaxOff

\newtheorem{pr}{Proposition}

\newtheorem{theorem}{Theorem}[section]
\newtheorem{proposition}[theorem]{Proposition}
\newtheorem{corollary}[theorem]{Corollary}
\newtheorem{lemma}[theorem]{Lemma}
\newtheorem{definition}[theorem]{Definition}
\theoremstyle{definition}

\numberwithin{equation}{section}

\title{A tale of two $q$-deformations : connecting dual polar spaces and weighted hypercubes}

\author{Pierre-Antoine Bernard}
\address{Centre de Recherches Mathématiques, Université de Montréal, C.P. 6128, Succursale Centre-ville, Montréal, QC H3C 3J7, Canada}
\email{pierre-antoine.bernard@umontreal.ca}

\author{Etienne Poliquin}
\address{Centre de Recherches Mathématiques, Université de Montréal, C.P. 6128, Succursale Centre-ville, Montréal, QC H3C 3J7, Canada}
\email{etienne.poliquin.1@umontreal.ca}

\author{Luc Vinet}
\address{IVADO and Centre de Recherches Mathématiques, Université de Montréal, C.P. 6128, Succursale Centre-ville, Montréal, QC H3C 3J7, Canada}
\email{luc.vinet@umontreal.ca}


\begin{document}

\begin{abstract}
Two $q$-analogs of the hypercube graph are introduced and shown to be related through a graph quotient. The roles of the subspace lattice graph, of a twisted primitive elements of $U_q(\mathfrak{su}(2))$ and of the dual $q$-Krawtchouk polynomials are elaborated upon. This paper is dedicated to Tom Koornwinder.
\end{abstract}

\maketitle

\section{Introduction}

A weighted hypercube was recently introduced by the authors as a $q$-deformation of the usual hypercube graph $Q_N$ \cite{q-hypercube}. The corresponding adjacency matrix $A_q$ projects onto the one-excitation Hamiltonian of the dual $q$-Krawtchouk spin chain when restricted to the subspace spanned by $q$-Dicke states. This extends the known relationship between the hypercube, the Krawtchouk spin chain, and Dicke states to their $q$-deformed counterparts. Moreover, $A_q$ was shown to represent a twisted primitive element of  $U_q(\mathfrak{su}(2))$ within the reducible representation obtained from the $N$-fold tensor product of two-dimensional representations, mirroring the relation between the adjacency matrix of $Q_N$ and the Lie algebra $\mathfrak{su}(2)$.

An alternative approach to $q$-deformed hypercube comes from the dual polar graphs, whose vertex set is composed of the maximal isotropic subspaces of a vector space defined on a finite field $\mathbb{F}_q$, with $q$ a prime power \cite{DRG, stanton1980some}. Like the $N$-cube, they are distance-regular and belong to a family of graphs corresponding to a $P$-polynomial association scheme. While the Hamming scheme to which the hypercube belongs is associated with Krawtchouk polynomials, the distance matrices of the dual polar schemes are linked to dual $q$-Krawtchouk polynomials. Additionally, the Terwilliger algebra of the Hamming and dual polar schemes are known to be respectively isomorphic to
 $\mathfrak{su}(2)$ and $U_q(\mathfrak{su}(2))$ \cite{Bernard_Hamming, Bernard_dual, go2002terwilliger,worawannotai2013dual}.

In this paper, we aim to show that these two frameworks, namely the hypercube with weights and the dual polar graphs, are connected through a graph quotient. Specifically, we will demonstrate that restricting the adjacency matrix of the symplectic dual polar graph to a subspace of its defining module yields the adjacency matrix of weighted hypercubic network as introduced in \cite{q-hypercube}. The structure of the paper is as follows: In Section 2, we review the definition of the weighted hypercubic network that serves as a $q$-deformation of the $N$-cube. In Section 3, we provide an overview of dual polar spaces and their $P$-polynomial association scheme. In Section 4, we demonstrate that the weighted hypercube can be obtained as a quotient graph of the symplectic dual polar graph by leveraging their shared connection to the subspace lattice.

Tom Koornwinder keeps having a profound influence on the field of orthogonal polynomials and special functions, and in particular on their connections with representation theory. He has especially contributed to the development of univariate and multivariate $q$-orthogonal polynomials and their interpretation through quantum algebras and other structures. It was he who stressed the important role in this regard of the twisted primitive element \cite{koornwinder1993askey,koornwinder1993q} which plays a key role in the present study. It is hence with great admiration and best wishes that we offer this paper as a contribution to this volume in his honour.

\section{A first $q$-analog: The weighted hypercube} \label{sec:2}

We recall the definition of the weighted hypercube introduced in \cite{q-hypercube}, and the role of its adjacency matrix $A_q$ as a representation of a twisted primitive element of $U_q(\mathfrak{su}(2))$. 

\subsection{Definition}
Let $V$ be the set of binary sequences of length $N$ and $\partial(x,y)$ denote the Hamming distance between two sequences $x = (x_1, x_2, \dots x_N)$ and $y = (y_1, y_2, \dots y_N)$,
\begin{equation}
    \partial(x,y) = |\{ i \in \{1, 2, \dots, N\} \ |\ x_i \neq y_i\}|.
\end{equation}
The hypercube graph $Q_N$ is defined on the set $V$, with edges connecting sequences $x$ and $y$ at Hamming distance $\partial(x,y) = 1$. For each $x \in V$, we associate an orthonormalized vector $\ket{x} \in \mathbb{C}^{2^N}$. The adjacency matrix of $Q_N$ is given by
\begin{equation}\label{eq:A}
    \bra{x}A\ket{y}=\left\{ \begin{array}{cc}
        1 & \text{if } \partial(x,y) = 1, \\
        0 & \text{otherwise.} 
    \end{array}\right.
\end{equation}
A $q$-deformation of the hypercube is obtained by modifying the adjacency matrix $A$ so that its non-zero entries take on values other than $1$, resulting in a weighted graph:
\begin{equation}\label{def:aq}
    \bra{x}A_q\ket{y}=\left\{ \begin{array}{cc}
          q^{i-N + 2\sum_{j =i+1}^{N} x_j }& \text{if } \partial(x,y) = 1, \ x_{i} \neq y_{i} \\
        0 & \text{otherwise.} 
    \end{array}\right.
\end{equation}
Note that as $q\rightarrow 1$, all weights in equation \eqref{def:aq} converges to $1$,  thereby yielding the adjacency matrix $A$ of the standard hypercube.
\subsection{Relation to $U_q(\mathfrak{su}(2))$}
The matrix $A_q$ has a representation theoretic underpinning in $U_q(\mathfrak{su}(2))$. This Hopf algebra is generated by four elements, denoted $e$, $f$, $k$ and $k^{-1}$, which satisfy the following defining relations,
\begin{gather}\label{uq(sl2)_def}
    k k^{-1}=k^{-1}k=\id,\qquad kek^{-1}=q^2e, \qquad kfk^{-1}=q^{-2}f \\ [e,f]=\frac{k-k^{-1}}{q-q^{-1}}\equiv[h]_q,
\end{gather}
where $k\equiv q^{h}$ and $[x]_q\equiv\frac{q^x-q^{-x}}{q-q^{-1}}$. Its fundamental representation is given by
\begin{equation}
    e\tends \sigma^+ = \begin{pmatrix}
        0&1\\0&0
    \end{pmatrix} \quad f\tends\sigma^-= \begin{pmatrix}
        0&0\\1&0
    \end{pmatrix}, \quad k\tends q^{\sigma^z}=
    \begin{pmatrix}
    q&0\\
    0&q^{-1}
\end{pmatrix}.
\end{equation}
It admits a coproduct $\Delta_q:U_q(\mathfrak{su}(2)) \rightarrow U_q(\mathfrak{su}(2)) \otimes U_q(\mathfrak{su}(2)) $, that is a homomorphism mapping the algebra into its two-fold tensor product as follows,
\begin{gather}\label{eq:copq}
    \Delta_q(f)=f \otimes k^{-1/2}+ k^{1/2} \otimes f, \qquad \Delta_q(e)=e \otimes k^{-1/2} +k^{1/2} \otimes e\\ 
    \Delta_q(k) = k\otimes k, \quad \Delta_q(h) = h\otimes \id + \id \otimes h.
\end{gather}

The matrix $A_q$ can be expressed in terms of the generators of $U_q(\mathfrak{su}(2))$ in the representation formed from $N$ copies of the fundamental representation. Consider the following matrices resulting from $N-1$ application of the coproduct on the $2$-dimensional representation of $e$, $f$ and $k$ :
\begin{equation}
    X^\pm = \Delta_q^{(N-1)}(\sigma^\pm), \quad K = \Delta_q^{(N-1)}(q^{\sigma^z}).
\end{equation}
Using the basis $\ket{x} = \ket{x_1}\otimes \dots \otimes \ket{x_n} \in \mathbb{C}^{2^N}$, with $\ket{0} = \binom{1}{0}$ and $\ket{1} = \binom{0}{1}$, it is straightforward to check that $A_q$ defined in \eqref{def:aq} can be expressed as
\begin{equation} \label{A_q_def}
    A_q = (\sqrt{q} X^- + \frac{1}{\sqrt{q}}X^+)K^{-1/2} = \sum_{i=1}^N \underbrace{I\otimes...\otimes  I}_{i-1\text{ times}}\otimes\ \sigma_x\otimes \underbrace{ q^{-\sigma^z} \otimes...\otimes q^{-\sigma^z}}_{N-i\text{ times}},
\end{equation}
and that it thus represents the element $Y = \left(\sqrt{q} f + \frac{1}{\sqrt{q}} e\right)k ^{-1/2}$ of $U_q(\mathfrak{su}(2))$ in the reducible representation obtained from the $N$-fold tensor product of its fundamental representation.
Note that the element $Y$ is a twisted primitive element of $U_q(\mathfrak{su}(2))$ verifying $\Delta_q(Y) = Y \otimes k^{-1} + I \otimes Y$ and defining a co-ideal subalgebra. 

\section{Second $q$-analog : The dual polar graph}

An alternative $q$-analog of the hypercube originates from the theory of association schemes.  This section introduces the relevant background and the graphs based on dual polar spaces. We begin by recalling the definition of distance-regular graphs and their connection to $P$-polynomial association schemes.

\subsection{Distance-regular graph and association schemes}
A graph is said to be distance-regular if for any pair of vertices $x$ and $y$, the number $p_{ij}^k$ of vertices $z$ at distance $i$ from $x$ and $j$ from $y$ depends only on the distance $k$ between $x$ and $y$. Let $\A_i$ denote the $i$-th distance matrix, whose entries are defined by
\begin{equation} \label{def_A1}
    (\A_i)_{xy} =\left\{
\begin{array}{ll}
      1 & \text{if }\text{dist}(x,y)=i,\\
      0 & \text{otherwise,}
\end{array} 
\right. 
\end{equation}
with $\text{dist}(x,y)$ the distance between the vertices $x$ and $y$ in the graph. The set of distance matrices $\{\A_i \ | \ i = 0, 1, \dots, N \}$ of a distance-regular graph forms a $P$-polynomial association scheme. Specifically, they are $(0,1)$-matrices and satisfy the defining relations of a symmetric association scheme \cite{algebraic_comb, DRG}:
\begin{enumerate}
\item[(i)] $\A_0 = I$, where $I$ is the identity matrix;
\item[(ii)] $\sum_{i = 0}^N \A_i = J$, where $J$ is the matrix of ones and $N$ is the diameter of the graph;
\item[(iii)] $\A_i = \A_i^t$ for all $i \in \{0,1,\dots, N\}$;
\item[(iv)] They verify the so-called Bose-Mesner relations : $\A_i \A_j = \sum_{k=0}^{N}p_{ij}^k \A_k.$
\end{enumerate}
In addition, they verify the $P$-polynomial property which asserts that for each distance $i$ there exists a polynomial $v_i$ of order $i$ such that 
\begin{equation}
    \A_i = v_i(\A_1).
\end{equation}
This condition is equivalent to requiring the Bose-Mesner relations with respect to $\A_1$ to take a three-term form,
\begin{equation}
    \A_1 \A_i = c_{i+1} \A_{i+1} + a_i \A_i + b_{i-1} \A_{i-1}.
\end{equation}

Note that $(0,1)$-matrices $\A_i$ forming a symmetric association scheme are typically referred to as adjacency matrices, as each defines a graph $G_i$. In the case of $P$-polynomial schemes, we use the term distance matrices for $\A_i$ with $i\neq 1$ to emphasize the role of $\A_1$ as the adjacency matrix of a distance-regular graph, with the other $\A_i$ standing for the corresponding distance matrices.

The $N$-cube is a well-known example of a distance-regular graph. Its distance matrices $\A_i$ belong to a set that forms a $P$-polynomial scheme known as the (binary) Hamming scheme $H(N,2)$. Each distance matrix $\A_i$ is expressible in terms of Krawtchouk polynomials $K_i(x; p, N)$ \cite{Koekoek}  evaluated on the adjacency matrix $\A_1 =A$ defined in equation \eqref{eq:A},
\begin{equation}\label{eq:kp}
    \A_i = \binom{N}{i} K_i\left(\frac{N}{2}- \frac{\A_1}{2} ;\frac{1}{2},N \right).
\end{equation}
The Bose-Mesner relation with respect to $\A_1$ reads
\begin{equation}\label{eq:ttrh}
    \A_1 \A_i = (i+1) \A_{i+1} + (N-i+1)\A_{i-1}.
\end{equation}

Next, we review the definitions of the dual Bose-Mesner algebra and of the Terwilliger algebra associated with a symmetric association scheme.

\subsection{The Terwilliger algebra of an association scheme} The definitions of the dual Bose-Mesner and Terwilliger algebras of an association scheme require the introduction of primitive idempotents and dual distance (or adjacency) matrices.  Let $\A_i$ for $i = 0, 1, \dots, N$ be a set of $(0,1)$-matrices that form a symmetric association scheme, and whose rows and columns are labeled by the elements of a set $X$. From the Bose-Mesner relations and the invariance of these matrices under transposition, it follows that they commute, i.e., $[\A_i, \A_j] = 0$, and share a common eigenbasis. Consequently, there exists a set of $N+1$ primitive idempotents $E_i$ that project onto the eigenspaces of the matrices $\A_i$. These idempotents satisfy the following relations :
\begin{equation}
     E_i E_j = \delta_{ij} E_i, \quad \sum_{i  =0 }^N E_i = I,
\end{equation}
and give an alternative basis for the Bose-Mesner algebra. Specifically, there exist coefficients $p_i(j)$ and $q_i(j)$ such that
\begin{equation}\label{eq:cb}
     \A_i = \sum_{i  =0 }^N p_i(j) E_i, \quad E_i = |X|^{-1}\sum_{i  =0 }^N q_i(j) \A_i,
\end{equation}
where 
$|X|$ denotes the cardinality of the set $X$ on which the association scheme is defined. The vector space spanned by the matrices $\A_i$ is closed under entry-wise product $\odot$, as these matrices satisfy $\A_i \odot \A_j = \delta_{ij} \A_i$. Since the matrices $E_i$ span the same space, it follows that there exist coefficients $q_{ij}^k$, known as Krein parameters, such that
\begin{equation}\label{eq:had1}
    E_i \odot E_j = |X|^{-1}\sum_{k  =0 }^N q_{ij}^k E_k.
\end{equation}
The dual matrices $\A_i^*(x_0)$ are \textit{diagonal} matrices defined with respect to a fixed element $x_0 \in X$, with their entries expressed in terms of the entries of the primitive idempotents $E_i$, 
\begin{equation}\label{def:dualA}
    (\A_i^*(x_0))_{xx} = |X| (E_i)_{xx_0}.
\end{equation}
In the following, we denote $\A_i^*$  as shorthand for $ \A_i^*(x_0)$. It can be shown from definition \eqref{def:dualA} and equation \eqref{eq:had1} that these matrices satisfy the so-called \textit{dual Bose-Mesner relations},
\begin{equation}\label{eq:had1}
    \A_i^*  \A_j^* = \sum_{k  =0 }^N q_{ij}^k  \A_k^*.
\end{equation}
The algebra generated by these dual matrices is known as the \textit{dual Bose-Mesner algebra}. Analogous to the concept of a $P$-polynomial association scheme, an association scheme is termed $Q$-polynomial if, for each $i$, the dual matrix $\A_i^*$ can be expressed as a polynomial $v_i^*$ of degree $i$ in $\A_1^*$.
\begin{equation}
    \A_i^* = v_i^* (\A_1^*).
\end{equation}
This is also equivalent to requiring that the dual Bose-Mesner relations involving $\A_1^*$ take a three-term recurrence form,
\begin{equation}
    \A_1^* \A_i^* = c_{i+1}^* \A_{i+1}^* + a_i^* \A_i^* + b_{i-1}^* \A_{i-1}^*.
\end{equation}

The Terwilliger algebra $\mathcal{T}$ of an association scheme is defined as the algebra generated by both the matrices $\A_i$ and their duals $\A_i^*$,
\begin{equation}
    \mathcal{T} = \langle \A_0, \A_1, \dots \A_N, \A_0^*, \A_1^*, \dots \A_N^* \rangle.
\end{equation}
In the case where an association scheme is both $P$- and $Q$-polynomial, the matrices $\A_i$ and $\A_i^*$ can be expressed in terms of $\A_1$ and $\A_1^*$, respectively. This significantly reduces the set of generators for the algebra $\mathcal{T}$, whose definition becomes
\begin{equation}
    \mathcal{T} = \langle \A_1, \A_1^* \rangle.
\end{equation}
This holds for the binary Hamming association scheme, which is known to be both $P$- and $Q$-polynomial. Its matrices $\A_1$ and $\A_1^*$ can be identified with a representation of the generators $2j^x$ and $2j^z$ of $\mathfrak{su}(2)$, establishing a correspondence between $\mathfrak{su}(2)$ and the Terwilliger algebra of the hypercube \cite{Bernard_Hamming, go2002terwilliger}.

\subsection{Dual polar graphs}
Consider a vector space $\mathbb{F}_q^D$ of dimension $D$ defined over a finite field $\mathbb{F}_q$ and equipped with a non-degenerate form $\mathfrak{B}$. A subspace $\V \subset \mathbb{F}_q^D$ is said to be \textit{isotropic} if the form $\mathfrak{B}$ vanishes on any pair of vectors $v_1$ and $v_2$ in $\V$, i.e.
\begin{equation}
    \mathfrak{B}(v_1,v_2) = 0, \quad \forall v_1, v_2 \in \V.
\end{equation}
An isotropic subspace $\V$ is further said to be \textit{maximal} if there is no isotropic subspace $\U$ such that $\V \subset \U$. By Witt's theorem, the maximal isotropic subspaces of $\mathbb{F}_q^D$ all have the same dimension $N \leq D/2$, which is referred to as the Witt index. A \textit{dual polar graph} has for vertices the set $X$ of all maximal isotropic subspaces of the vector space $\mathbb{F}_q^D$ equipped with a form $\mathfrak{B}$. An edge connects two vertices $\V$ and $\U$ if $\text{dim}(\V \cap\, \U) = N-1$. Let each vertex $ \V \in X$ be associated with an orthonormalized vector $\ket{\V}$ in $\mathbb{C}^{|X|}$. The distance matrices $\A_i$ of a dual polar graph are defined by
\begin{equation}
    \bra{\V}\A_i\ket{\,\U} = \left\{
	\begin{array}{ll}
		1  & \mbox{if }  \text{dim}(\V \cap \U) = N-i,\\
		0 & \mbox{otherwise. }
	\end{array}
\right.
\end{equation}
Dual polar graphs and are known to be distance-regular, with their respective distance matrices $\A_i$ expressed as dual $q$-Krawtchouk polynomial $K_i(\lambda(x); c, N | q )$ of $\A_1$ \cite{DRG}:
\begin{equation}\label{eq:dqk}
    \A_i = (-1)^i q^{\binom{i}{2}} \bigl[\begin{smallmatrix}
        N\\i
    \end{smallmatrix}\bigr]_q K_i\left( q^{-N}(1-q) \A_1 + q^{-N}(1-q^e) ; - q^e, N\,|\,q\right),
\end{equation}
where $e$ is a parameter that depends on the type of form $\mathfrak{B}$ and the dimension $D$ (see Table \ref{tab:pc1}) \cite{stanton1980some}. 
\begin{table}[h]
    \centering
    \begin{tabular}{c|c|c|c}
       type of form $\mathfrak{B}$ & dimension $D$ & type of vector space & value of $e$  \\
        \hline
        \hline
         bilinear, skew-symmetric & $2N$ & $C_N(q)$ & $1$  \\
         bilinear, symmetric & $2N$ & $D_N(q)$ &  $1$  \\
        bilinear, symmetric  & $2N + 2$& $ \prescript{2}{}{D}_{N+1}(q) $& $0$  \\
         bilinear, symmetric  & $2N+1$& $B_N(q)$ & $2$  \\
        hermitian & $2N$ & $\prescript{2}{}{A}_{2N}(\sqrt{q})$ & $3/2$  \\
        hermitian & $2N + 1$ & $ \prescript{2}{}{A}_{2N-1}(\sqrt{q}) $ & $1/2$ 
    \end{tabular}
    \caption{Parameter $e$ for the different types of vector spaces of Witt index $N$. The type of a vector space is determined by its dimension  $D$ and the type of the form $\mathfrak{B}$, with notation chosen to align with the classification of Lie-type groups associated with these non-degenerate forms. Note that the last two types require $q$ to be a square of a prime power. }
    \label{tab:pc1}
\end{table}
The Bose-Mesner relations with respect to the adjacency matrix $\A_1$ for an association scheme based on a dual polar graph read \cite{DRG}:
\begin{equation}\label{eq:ttr2}
    \A_1 \A_i = [i+1]_q \A_{i+1} + (q^e -1)[i]_q \A_i + q^{i-1+e}[N-i+1]_q   \A_{i-1}.
\end{equation}

Since equations \eqref{eq:ttrh} and \eqref{eq:kp} can be recovered in the $q\rightarrow1$ limit of equations \eqref{eq:ttr2} and \eqref{eq:dqk} respectively, the dual polar graphs are often referred to in the literature as  distance-regular $q$-analogs of the hypercube. Note that they are composed of $(-q^{N+e-1};q)_N$ vertices \cite{DRG}, which is greater than the number $2^N$ of vertices in the hypercube but converges to the same value as $q$ goes to $1$. This contrasts with the weighted hypercube introduced in the previous section, which has $2^N$ vertices regardless of the value of $q$. This difference motivates looking for a quotient graph relating the two constructions.

Finally, let us note that, like the weighted hypercube discussed in Section \ref{sec:2}, dual polar graphs are also connected to $U_q(\mathfrak{su}(2))$. The association scheme composed of the distance matrices of a dual polar graph is known to be $Q$-polynomial and has a Terwilliger algebra $\mathcal{T}$ generated by the adjacency matrix $\A_1$ and its dual $\A_1^*$. These matrices can be identified with a representation of the generators of $U_{\sqrt{q}}(\mathfrak{su}(2))$. According to Theorem 24.3 of \cite{worawannotai2013dual}, up to an algebra isomorphism, the following holds:

\begin{theorem} (Reformulation of Theorem 24.3 of \cite{worawannotai2013dual}.)
    Let $\A_1$ and $\A_1^*$ denote the adjacency and dual adjacency matrices of the dual polar graph defined on the set $X$ of maximal isotropic subspaces (of dimension $N$) of a vector space defined over a finite field $\mathbb{F}_q$ and of type $e$ with respect to Table \ref{tab:pc1}. Then, there exists a representation $\rho : U_{\sqrt{q}}(\mathfrak{su}(2)) \rightarrow \text{End}(\mathbb{C}^{|X|})$ and two elements $\Upsilon$ and $\Psi$ in the centralizer of $\rho(U_{\sqrt{q}}(\mathfrak{su}(2)))$ such that
    \begin{equation}
    \A_1 = h\rho(1) + ( \kappa \Upsilon^{-1} \Psi + \upsilon  \Upsilon \Psi^{-1} ) \rho(k) - \kappa (q - 1) \Upsilon^{-1} \Psi  \rho(k^{1/2} f) + \upsilon  (q^{1/2} - q^{-1/2})\Upsilon \Psi^{-1} \rho(e k^{1/2}),
\end{equation}
\begin{equation}
    \A_1^* = h^* + \kappa^* \Upsilon^{-1} \Psi^{-1} \rho(k^{-1}),
\end{equation}
where $h$, $h^*$, $\kappa$, $\kappa^*$ and $\upsilon$ are the following constants:
\begin{gather}
    h = - \frac{q^e - 1}{q-1}, \quad h^* = - q \frac{(q^{N+e-2} + 1)}{q-1}, \\
    \kappa = \frac{q^{e + N/2}}{q-1}, \quad \kappa^* = q^{2-N/2} \frac{(q^{N+e-2} + 1) (q^{N+e-1} + 1)}{(q-1)(q^e + q)}, \quad \upsilon = - \frac{q^{N/2}}{q-1}.
\end{gather}
\end{theorem}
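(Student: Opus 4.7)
The plan is to derive this statement as a translation of Theorem 24.3 of \cite{worawannotai2013dual} into the Hopf-algebra language of $U_{\sqrt{q}}(\mathfrak{su}(2))$ with generators $e,f,k,k^{-1}$. The starting point is the well-known decomposition of the standard module $\mathbb{C}^{|X|}$ of the dual polar graph under its Terwilliger algebra $\mathcal{T}$ into a direct sum of \emph{thin} irreducible $\mathcal{T}$-modules $W_{r,d}$ indexed by an endpoint $r$ and a diameter $d$. On each $W_{r,d}$ the natural basis $\{E_{r+s}^* v\}_{s=0}^{d}$ diagonalizes $\A_1^*$, while $\A_1$ acts tridiagonally with the coefficients read off from \eqref{eq:ttr2}, depending explicitly on $N$, $e$, $r$, $d$, and $s$.

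First I would construct the representation $\rho$ by declaring that on each $W_{r,d}$ it agrees with the $(d{+}1)$-dimensional irreducible representation of $U_{\sqrt{q}}(\mathfrak{su}(2))$, so that $\rho(k)\,E_{r+s}^* v = \sqrt{q}^{\,d-2s}\,E_{r+s}^* v$ and $\rho(e),\rho(f)$ act as the standard raising and lowering operators in this weight basis. The operators $\Upsilon$ and $\Psi$ would then be defined to act as scalars $\Upsilon_{r,d}$ and $\Psi_{r,d}$ on $W_{r,d}$, chosen so that $\kappa\,\Upsilon_{r,d}^{-1}\Psi_{r,d}$ and $\upsilon\,\Upsilon_{r,d}\Psi_{r,d}^{-1}$ reproduce the highest- and lowest-weight diagonal contributions of $\A_1$ on $W_{r,d}$. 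Being constant on each irreducible $\rho(U_{\sqrt{q}}(\mathfrak{su}(2)))$-component, $\Upsilon$ and $\Psi$ automatically lie in the centralizer of $\rho(U_{\sqrt{q}}(\mathfrak{su}(2)))$.

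Next I would verify the formula for $\A_1^*$: its eigenvalue on the $E_i^*$-layer of $W_{r,d}$ is an explicit function of $i,r,d$ obtained from \eqref{def:dualA} together with the dual $q$-Krawtchouk entries of $E_1$, and matching it against $h^* + \kappa^*\,\Upsilon^{-1}\Psi^{-1}\rho(k^{-1})$ pins down both $h^*$ and $\kappa^*$. For $\A_1$ I would expand the ansatz
\begin{equation*}
    h\,\rho(1) + \bigl(\kappa\,\Upsilon^{-1}\Psi + \upsilon\,\Upsilon\Psi^{-1}\bigr)\rho(k) - \kappa(q-1)\,\Upsilon^{-1}\Psi\,\rho(k^{1/2}f) + \upsilon(q^{1/2}-q^{-1/2})\,\Upsilon\Psi^{-1}\,\rho(e\,k^{1/2})
\end{equation*}
on $W_{r,d}$ and match its three-term action against the coefficients $[i{+}1]_q$, $(q^e{-}1)[i]_q$, and $q^{i-1+e}[N{-}i{+}1]_q$ of \eqref{eq:ttr2}: the diagonal contribution $a_i$ forces the displayed values of $h$ and fixes the combination $\kappa+\upsilon$ via the weights of $\rho(k)$, while the off-diagonal terms $c_{i+1}$ and $b_{i-1}$ fix $\kappa$ and $\upsilon$ separately through the raising and lowering factors $\rho(k^{1/2}f)$ and $\rho(e\,k^{1/2})$.

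The main obstacle is pure bookkeeping: checking that the three displayed closed-form constants $(h,\kappa,\upsilon)$ together with $(h^*,\kappa^*)$ and a \emph{single} pair $(\Upsilon,\Psi)$ of centralizer operators reproduce the correct Askey–Wilson-type structure constants simultaneously for every $(r,d)$ occurring in the decomposition of $\mathbb{C}^{|X|}$. Once the endpoint-and-diameter dependence has been cleanly absorbed into the scalars $\Upsilon_{r,d}$ and $\Psi_{r,d}$, the remaining match of the $N$- and $e$-dependence reduces to the presentation of $\mathcal{T}$ obtained in \cite{worawannotai2013dual}, from which the stated identities for $\A_1$ and $\A_1^*$ then follow.
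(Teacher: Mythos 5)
The first thing to note is that the paper does not prove this statement at all: it is imported verbatim (as a ``reformulation'') from Theorem 24.3 of Worawannotai's paper, with no proof environment attached. So there is no in-paper argument to compare yours against; what you have written is an outline of how one might reprove the cited result, and it has to be judged on its own terms.

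On those terms there is a concrete gap. Your plan is to read off the tridiagonal action of $\A_1$ on each thin irreducible $\mathcal{T}$-module $W_{r,d}$ from the Bose--Mesner relation \eqref{eq:ttr2} and then match coefficients. But \eqref{eq:ttr2} is a relation among the distance matrices $\A_i$ and encodes only the intersection numbers of the graph; it gives the matrix of $\A_1$ in the basis $\{\A_i\hat{x}_0\}$ of the \emph{primary} module ($r=0$, $d=N$) and nothing more. For a non-primary thin irreducible $\mathcal{T}$-module the entries $c_i(r,d)$, $a_i(r,d)$, $b_i(r,d)$ are different (they form a dual $q$-Krawtchouk Leonard system with shifted parameters), and determining them is precisely the classification of irreducible $\mathcal{T}$-modules of dual polar graphs that constitutes the bulk of \cite{worawannotai2013dual}. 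Your first paragraph acknowledges that the coefficients depend on $r$ and $d$, but your matching step then uses the $(r,d)$-independent coefficients $[i+1]_q$, $(q^e-1)[i]_q$, $q^{i-1+e}[N-i+1]_q$, which is only valid on the primary module. Relatedly, the entire substance of the theorem --- that a \emph{single} pair of centralizer elements $\Upsilon$, $\Psi$ (two free scalars per isomorphism class of module) can simultaneously satisfy the four or five coefficient-matching conditions coming from $\A_1$ and $\A_1^*$ on every $W_{r,d}$, with $h$, $h^*$, $\kappa$, $\kappa^*$, $\upsilon$ independent of $(r,d)$ --- is deferred in your write-up to ``pure bookkeeping.'' It is not bookkeeping; it is the theorem. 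Without the explicit module data and the verification that the overdetermined system is consistent, the argument is a restatement of the goal rather than a proof. The honest course here, and the one the paper itself takes, is to cite \cite{worawannotai2013dual}; if you want a self-contained derivation you must first establish (or quote precisely) the structure constants of all thin irreducible $\mathcal{T}$-modules of the dual polar graph.
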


\section{Subspace lattice and quotient graphs}

Here, we recall the definition of the subspace lattice $L_N(q)$ and demonstrate that the two $q$-analogs of the hypercube discussed in the previous sections are related to this graph through a quotient construction.

\subsection{Subspace lattice $L_N(q)$} 
Let $\mathbb{F}_q^N$ refer to a vector space of dimension $N$ defined over the finite field with $q$ elements $\mathbb{F}_q$. Let $\Vq$ be the set of subspaces of $\mathbb{F}_q^N$. Given two subspaces $\U,\V \in \Vq$, it is said that $\U$ \textit{covers} $\V$ if 
\begin{equation}
    \V\subset \U,\qquad \dim{(\U)}=\dim{(\V)}+1.
\end{equation}
The subspace lattice graph $L_N(q)$ has the vertex set $\Vq$. An edge connects two vertices $\U$ and $\V$ if one subspace covers the other. The incidence algebra $\mathcal{I}$ of $L_N(q)$ is generated by matrices that act on the vector space $\mathbb{C}^{|V_q|}$, spanned by orthonormal basis vectors $\ket{\,\U}$ labeled by elements $\U \in V_q$. The set of generators of $\mathcal{I}$ consists of projectors $\mathsf{E}_i^*$ associated to subspaces of dimension $i$,
\begin{equation}\label{def:Ei}
    \bra{\U}\mathsf{E}_i^*\ket{\V} = \left\{\begin{array}{cc}
     1 & \text{dim}(\U) = i,\ \U = \V\\
     0& \text{otherwise,}
\end{array}\right.  
\end{equation}
and the following raising and lowering matrices $\mathsf{R}$ and $\mathsf{L}$,
\begin{gather} \label{def:RL}
       \bra{\U}\mathsf{R}\ket{\V}=\left\{\begin{array}{cc}
     1&  \text{if $\U$ covers $\V$}\\
     0& \text{otherwise}
\end{array}\right. , \qquad
\bra{\U}\mathsf{L}\ket{\V}=\left\{\begin{array}{cc}
     1&  \text{if $\V$ covers $\U$}\\
     0& \text{otherwise.}
\end{array}\right.
\end{gather}
Note that $\mathsf{R}^t=\mathsf{L}$. It is convenient to introduce a diagonal invertible matrix $\mathsf{K}$ that is expressed in terms of the projectors $\mathsf{E}_i^*$, 
\begin{equation} \label{def:K}
\mathsf{K}=\sum_{i=0}^Nq^{\frac{N}{2}-i} \mathsf{E}_i^*.
\end{equation}
It is shown in \cite{Bernard_dual, Watanabe} that there exists a surjective algebra homomorphism between $\mathsf{R}, \mathsf{L}, \mathsf{K}$ and the generators of $U_{\sqrt{q}}(\mathfrak{su}(2))$ that sends
\begin{equation}\label{eq:repsl}
    e\tends q^{\frac{1-N}{4}}\mathsf{L}, \quad f\tends q^{\frac{1-N}{4}}\mathsf{R}, \quad k^{\pm1}\tends\mathsf{K}^{\pm1}.
\end{equation}
By further using the involutive algebra automorphism $\theta$, defined by $\theta(e) = f$, $\theta(f) = e$, and $\theta(k^{\pm 1}) = k^{\mp 1}$, one obtains a second homomorphism:
\begin{equation}\label{eq:repsl2}
    e\tends q^{\frac{1-N}{4}}\mathsf{R}, \quad f\tends q^{\frac{1-N}{4}}\mathsf{L}, \quad k^{\pm1}\tends\mathsf{K}^{\mp1}.
\end{equation}
The twisted primitive element $Y = \left(\sqrt{q} f + \frac{1}{\sqrt{q}} e\right)k^{-1/2}$ of $U_q(\mathfrak{su}(2))$, introduced in Section \ref{sec:2}, is mapped under the first homomorphism \eqref{eq:repsl} and a similarity transformation to the weighted adjacency matrix $M_q(N)$ introduced in \cite{ghosh2022q}, describing a $q$-analog of the hypercube based on the subspace lattice. This matrix was further studied in \cite{terwilliger2023aq, terwilliger2024projective} for its $Q$-polynomial structure, similar to that of the $N$-cube. The image of $Y$ under the second homomorphism \eqref{eq:repsl2} will play a key role in connecting the symplectic dual polar graph to the weighted hypercube, and will be denoted
\begin{equation}\label{def:Ysf}
    \mathsf{Y} = q^{\frac{1-N}{4}}( q^{1/4} \mathsf{L} + q^{-1/4} \mathsf{R}) \mathsf{K}^{1/2}.
\end{equation}
It is readily observed that this corresponds to the adjacency matrix of a weighted subspace lattice graph.

Next, we demonstrate the existence of a $U_{\sqrt{q}}(\mathfrak{su}(2))$-submodule within the standard module of the subspace lattice $\mathbb{C}^{|\Vq|}$, that is linked to the adjacency algebra of the weighted hypercube from Section \ref{sec:2}.


\subsection{Relation to weighted cubes} To establish a connection between the subspace lattice $L_N(q)$ and the weighted $N$-cube of Section \ref{sec:2}, we require a map from the set of subspaces $V_q$ to the set of vertices of the hypercube $V = \{0,1\}^N$. First, we consider an injective map $\tau$ between $V_q$ and $N\times N$ matrices with entries in $\mathbb{F}_q$.

\begin{definition}\label{def:tau}
    Let $\{e_1, e_2, \dots e_N\}$ be a basis of $\mathbb{F}_q^N$. Let $V_q$ denote the set of subspaces of $\mathbb{F}_q^N$. The map $\tau: V_q \rightarrow \text{End}(\mathbb{F}_q^N)$ is defined as the map taking a subspace $\V \in V_q$ to the unique upper triangular $N\times N$ matrix $\boldsymbol{v}$ with entries $\boldsymbol{v}_{ij} \in \mathbb{F}_q$, whose columns $\boldsymbol{v}_j$ span $\V$,
\begin{equation}
     \V = \text{span}\left\{ \boldsymbol{v}_j = \sum_i v_{ij} e_i \ | \ j = 1,2,\dots N\right\},
\end{equation}
and whose diagonal entries verify $\boldsymbol{v}_{ii} \in \{0,1\}$ and
\begin{equation}
    \boldsymbol{v}_{ij} \neq 0\quad  \Rightarrow  \quad \boldsymbol{v}_{ii} \neq 1, \ \boldsymbol{v}_{jj} \neq 0.
\end{equation}
\end{definition}
The upper triangular matrices $\boldsymbol{v} = \tau(\V)$ in the previous definition have diagonal entries restricted to $0$ and $1$. A column $\boldsymbol{v}_j$ in $\boldsymbol{v}$ consists entirely of $0$ if $\boldsymbol{v}_{jj} = 0$. Furthermore, when $\boldsymbol{v}_{jj} = 1$, the $j$-th column is the only one with a non-zero entry in the $j$-th row. For instance, for $N = 5$, the possible matrices that satisfy these conditions and have $(0,1,0,1,1)$ on the diagonal are
\begin{equation}
    \boldsymbol{v} = \begin{pmatrix}
        0 & \boldsymbol{v}_{12} & 0 & \boldsymbol{v}_{14} & \boldsymbol{v}_{15}\\
        0 & 1 & 0 & 0 & 0\\
        0 &0 & 0 & \boldsymbol{v}_{34} & \boldsymbol{v}_{35}\\
        0 & 0 & 0 &1 & 0\\
        0 & 0 & 0 & 0 & 1\\
    \end{pmatrix}.
\end{equation}
It follows from a Gaussian elimination procedure that, for any subspace 
$\V \in V_q$, there exists a unique matrix $\boldsymbol{v}$ satisfying the conditions in Definition \ref{def:tau}, ensuring that $\tau$ is well-defined. The next lemma translates the notion of covering for subspaces in 
$V_q$ into constraints on their image under $\tau$ in $\text{End}(\mathbb{F}_q^N)$.

\begin{lemma} \label{lem:corr}
    Let $\V$ and $\U$ be subspaces of the vector space $\mathbb{F}_q^N$, with $\boldsymbol{v} = \tau(\V)$ and $\boldsymbol{u} = \tau(\U)$ their corresponding upper triangular matrices under the map $\tau$ from Definition \ref{def:tau}. Let $\boldsymbol{v}_{j}$ and $\boldsymbol{u}_{j}$ denote the $j$-th column of the matrices $\boldsymbol{v}$ and $\boldsymbol{u}$ respectively. The subspace $\V$ covers $\U$ if and only if the following conditions are satisfied:
    \begin{enumerate}
        \item[(i)]  The diagonal of $\boldsymbol{v}$ and $\boldsymbol{u}$ are at Hamming distance $1$, with a unique coordinate $k \in \{1,2,\dots, N\}$ such that $\boldsymbol{v}_{kk} = 1$ and $\boldsymbol{u}_{kk} = 0$.

        \item[(ii)]  We have $ \boldsymbol{v}_{j} =  \boldsymbol{u}_{j}$ for all $j = 1,2,\dots , k-1$. 
        \item[(iii)] There exist coefficients $c_j \in \mathbb{F}_q$ such that $ \boldsymbol{u}_{j} =  \boldsymbol{v}_{j} + c_j \boldsymbol{v}_k$ for all $j = k+1,k+2,\dots , N$.  
    \end{enumerate}
\end{lemma}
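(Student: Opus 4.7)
The plan is to analyze both directions by introducing the pivot set $P_\V := \{i : \boldsymbol{v}_{ii} = 1\}$ and exploiting the standard flag $W_i := \text{span}\{e_1, \ldots, e_i\}$. First I will show, directly from the reduced-form conditions of Definition \ref{def:tau}, that $|P_\V| = \dim \V$ and
\begin{equation}
P_\V = \{i \in \{1,\dots,N\} : \dim(\V \cap W_i) > \dim(\V \cap W_{i-1})\}.
\end{equation}
This characterization makes the pivot set a subspace invariant that behaves well under inclusion, and it translates the diagonal of $\tau(\V)$ into the indicator of $P_\V$, so that Hamming distance of diagonals coincides with $|P_\V \triangle P_\U|$.

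For the forward direction, assume $\V$ covers $\U$. Any $u \in \U \cap W_i$ whose row-$i$ entry is non-zero also lies in $\V \cap W_i$ with non-zero row-$i$ entry, so the flag characterization above immediately gives $P_\U \subset P_\V$. Combined with $|P_\U| = |P_\V| - 1$, this yields a unique index $k \in P_\V \setminus P_\U$, which is (i). For (ii), the sets $P_\V$ and $P_\U$ agree on $\{1, \ldots, k-1\}$, forcing $\V \cap W_{k-1} = \U \cap W_{k-1}$ by dimension, and the uniqueness of the reduced form within this common subspace gives $\boldsymbol{u}_j = \boldsymbol{v}_j$ for $j < k$. For (iii), I expand $\boldsymbol{u}_j \in \V \cap W_j$ in the reduced basis $\{\boldsymbol{v}_{j'} : j' \in P_\V \cap \{1, \ldots, j\}\}$; the reduced form of $\boldsymbol{v}$ ensures that the coefficient of $\boldsymbol{v}_{j'}$ is simply the $(j',j)$-entry $\boldsymbol{u}_{j' j}$, while the reduced form of $\boldsymbol{u}$ forces this entry to vanish for every $j' \in P_\U \cap \{1,\ldots,j\}$ except $j' = j$ (where it equals $1$). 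The only possibly non-zero remaining coefficient sits at $j' = k$, which I name $c_j$, giving $\boldsymbol{u}_j = \boldsymbol{v}_j + c_j \boldsymbol{v}_k$. The case $j \notin P_\V$ is immediate since both columns vanish.

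The backward direction is shorter: condition (i) gives $\dim \U = \dim \V - 1$, while (ii) and (iii) exhibit every non-zero column of $\boldsymbol{u}$ as an $\mathbb{F}_q$-linear combination of columns of $\boldsymbol{v}$, so $\U \subset \V$, and therefore $\V$ covers $\U$.

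The main obstacle is the bookkeeping in step (iii): one must carefully distinguish non-pivot rows, where the entries of $\boldsymbol{u}$ and $\boldsymbol{v}$ are free and may differ in uncontrolled ways, from pivot rows in $P_\V$, where the reduced-form conditions of Definition \ref{def:tau} pin down each coefficient in the expansion of $\boldsymbol{u}_j$. Once the inclusion $P_\U \subset P_\V$ is established via the flag characterization, the rest of the argument is a systematic but delicate unpacking of the reduced-form constraints.
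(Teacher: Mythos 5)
Your proof is correct and follows essentially the same route as the paper's Appendix \ref{app:A}: you establish that the pivot set of $\boldsymbol{u}$ is contained in that of $\boldsymbol{v}$ (the content of Lemma \ref{lem:sub}, which you obtain via the flag $W_i$), count pivots against dimensions to get (i), and then expand each column $\boldsymbol{u}_j$ in the pivot columns of $\boldsymbol{v}$, reading off the coefficients from the reduced-form structure exactly as in Lemma \ref{lem:col}. Your flag characterization of the pivot set and the uniqueness-of-$\tau$ argument for (ii) are pleasant conceptual variants of the paper's more direct matrix computations, but the decisive step (iii) is the same coefficient extraction.
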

\begin{proof}
The derivation is based on simple concepts of linear algebra and is provided in Appendix \ref{app:A}.
\end{proof}

\begin{corollary}\label{cor:nk}
    Let $x$ and $y$ be sequences in $\{0,1\}^N$ at Hamming distance $1$, with $x_k = 1$ and $y_k = 0$. Given any subspace $\mathcal{V} \subseteq \mathbb{F}_q^N$ whose matrix $\tau(\V) = \boldsymbol{v}$ has diagonal entries given by $x$, the number $n_k(x)$ of subspaces $\U$ covered by $\V$ and whose matrix $\tau(\U) = \boldsymbol{u}$ has entries on the diagonal given by $y$ is
    \begin{equation}
        n_k(x) = q^{\sum_{j = k+1}^N x_j}.
    \end{equation}
\end{corollary}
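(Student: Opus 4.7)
The plan is to use Lemma \ref{lem:corr} to parametrize the subspaces $\U$ covered by $\V$ by the free coefficients $c_j$ appearing in condition (iii), and then determine which choices of these coefficients yield a matrix $\boldsymbol{u}$ that lies in the normal form required by Definition \ref{def:tau} and has diagonal equal to $y$.

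First I would observe that, by Lemma \ref{lem:corr}, every subspace $\U$ covered by $\V$ whose diagonal is $y$ corresponds to a matrix $\boldsymbol{u}$ whose columns $j = 1, \ldots, k-1$ agree with those of $\boldsymbol{v}$, whose column $k$ is forced to be $0$ (since $\boldsymbol{u}_{kk} = y_k = 0$ and the normal form of $\tau$ makes the entire column zero), and whose remaining columns take the form $\boldsymbol{u}_j = \boldsymbol{v}_j + c_j \boldsymbol{v}_k$ for $j > k$, with $c_j \in \mathbb{F}_q$. This sets up a parametrization of candidate $\U$'s by tuples $(c_{k+1}, \ldots, c_N)$.

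Next I would determine exactly which tuples produce a valid normal form matrix $\boldsymbol{u}$ with diagonal $y$. Split into two cases according to the value of $x_j$ for $j > k$. If $x_j = 0$, then $\boldsymbol{v}_j = 0$ (entire column vanishes in normal form), and we need $\boldsymbol{u}_j = 0$ as well (since $y_j = x_j = 0$), so $c_j \boldsymbol{v}_k = 0$ forces $c_j = 0$. If $x_j = 1$, then $y_j = 1$ and I have to check that any choice of $c_j \in \mathbb{F}_q$ gives a column compatible with the normal form. The diagonal entry is $\boldsymbol{u}_{jj} = \boldsymbol{v}_{jj} + c_j \boldsymbol{v}_{kj} = 1 + 0 = 1$ since row $k$ of $\boldsymbol{v}$ vanishes outside column $k$ (as $\boldsymbol{v}_{kk} = 1$ forces row $k$ of $\boldsymbol{v}$ to be zero elsewhere). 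For every $i < j$ with $y_i = 1$, one must verify $\boldsymbol{u}_{ij} = 0$: if $i \neq k$ then $\boldsymbol{v}_{ij} = 0$ (row $i$ of $\boldsymbol{v}$ is zero off the diagonal since $\boldsymbol{v}_{ii} = 1$), and $\boldsymbol{v}_{ik} = 0$ as well (either because $i > k$ with upper-triangularity, or because $i < k$ with $\boldsymbol{v}_{ii} = 1$ forbidding an entry in column $k$), so $\boldsymbol{u}_{ij} = 0$ automatically.

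Hence a valid $\U$ corresponds exactly to a choice of an arbitrary $c_j \in \mathbb{F}_q$ for each $j > k$ with $x_j = 1$, and $c_j$ is forced to $0$ otherwise. The total count is therefore $q^{|\{j > k : x_j = 1\}|} = q^{\sum_{j=k+1}^{N} x_j}$, establishing the formula for $n_k(x)$. The only delicate step is the case analysis confirming that the normal form constraints impose no additional restriction on the $c_j$ when $x_j = 1$; everything else follows directly from Lemma \ref{lem:corr}.
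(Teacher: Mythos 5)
Your proof is correct and follows essentially the same route as the paper's, which simply asserts that the covered subspaces with diagonal $y$ are in correspondence with the free coefficients $c_j$ of Lemma \ref{lem:corr}(iii) for the indices $j$ with $x_j \neq 0$ and concludes by counting; you merely spell out the normal-form verification that the paper leaves implicit. The only blemish is a harmless index transposition: the entry relevant to computing $\boldsymbol{u}_{jj}$ is $\boldsymbol{v}_{jk}$ (the $j$-th entry of column $k$, which vanishes by upper triangularity since $j>k$), not $\boldsymbol{v}_{kj}$, though both are zero so the conclusion stands.
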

\begin{proof}
    The subspaces $\U$ covered by $\V$ whose matrix $\tau(\U) = \boldsymbol{u}$ has entries on the diagonal given by $y$ are in correspondence with the different possible coefficients $c_j$ in condition (iii) of Lemma \ref{lem:corr}, with $j$ such that $x_j = \boldsymbol{v}_{jj} \neq 0$. The result follows from a counting argument.
\end{proof}

Now, let us denote $\tau_{diag} : V_q \rightarrow V$ the surjective map that takes a subspace $\V$ to the $(0,1)$-sequence of length $N$ corresponding to the diagonal of $\tau\left(\V\right)$,
\begin{equation}
    \tau_{diag}\left(\V\right) = (\boldsymbol{v}_{11}, \boldsymbol{v}_{22}, \dots \boldsymbol{v}_{NN}).
\end{equation}
Let $\tau_{diag}^{-1}(x)$ be the set of subspaces $\V$ whose associated matrix $\tau(\V) = \boldsymbol{v}$ has the sequence $x \in V$ as diagonal $(\boldsymbol{v}_{11}, \boldsymbol{v}_{22}, \dots \boldsymbol{v}_{NN})$, and $|\tau_{diag}^{-1}(x)|$ the size of that set. This can be readily expressed as
\begin{equation}\label{eq:size}
    |\tau_{diag}^{-1}(x)| = q^{\sum_{i=1}^N \sum_{j = 1}^{i-1} x_i (1-x_j)}.
\end{equation}
We use $\tau_{diag}$ and its preimage to define a linear map $\phi: \mathbb{C}^{|V|} \rightarrow \mathbb{C}^{|V_q|} $ from the standard module of the hypercube to the standard module of the subspace lattice as follows. 

\begin{definition}\label{def:phi}
 The linear map $\map: \mathbb{C}^{|V|} \rightarrow \mathbb{C}^{|V_q|} $ acts on the basis vectors $\ket{x} \in \mathbb{C}^{|V|}$, with $x \in V  = \{0,1\}^N$, as follows
\begin{equation}
    \map(\ket{x}) = \frac{1}{\sqrt{|\tau_{diag}^{-1}(x)|}}\sum_{\V \in \tau_{diag}^{-1}(x)} \ket{\V}.
\end{equation}
\end{definition}
In other words, $\map$ takes the basis vectors $\ket{x}$ of the standard module of the hypercube to a coherent sum of vectors $\ket{\V}$ in the standard module of the subspace lattice. The sum is over the subspaces $\V$ such that $\tau_{diag}(\V) = x$, and each vector $\map(\ket{x})$ is the characteristic vector of an equivalence class in $V_q/\!\!\sim$, with $\V \sim \V'$ if and only if $\tau_{diag}(\V) =\tau_{diag}(\V')$. The following lemma describes the action of the generators $\mathsf{R}$, $\mathsf{L}$ and $\mathsf{E}_i^*$ of the incidence algebra of $L_N(q)$ on the vectors $\map(\ket{x})$.

\begin{lemma}\label{lem:actRL}
    Let $\mathsf{R}$, $\mathsf{L}$ and $\mathsf{E}_i^*$ be the generators of the incidence algebra of $L_N(q)$, as defined in \eqref{def:Ei}-\eqref{def:RL}. Let $\map: \mathbb{C}^{|V|} \rightarrow \mathbb{C}^{|V_q|} $ be the linear map of Definition \eqref{def:phi} and $\hat{k}$ denote the $(0,1)$-sequence of length $N$ with a single non-zero entry in the $k$-th position. For any $(0,1)$-sequence $x$ of length $N$, one finds
    \begin{equation}\label{eq:R}
        \mathsf{R}\circ \map (\ket{x}) = \sum_{\substack{k \in \{1,2,\dots, N\} \\ x_k = 0} } q^{\frac{k-1}{2} - \frac{1}{2}\sum_{j = 1}^{k-1} x_j + \frac{1}{2}\sum_{j = k+1}^N x_j} \map\left(\ket{x + \hat{k}}\right),
    \end{equation}
        \begin{equation}\label{eq:L}
        \mathsf{L} \circ \map ( \ket{x}) = \sum_{\substack{k \in \{1,2,\dots, N\} \\ x_k = 1} } q^{\frac{k-1}{2} - \frac{1}{2}\sum_{j = 1}^{k-1} x_j + \frac{1}{2}\sum_{j = k+1}^N x_j} \map\left(\ket{x - \hat{k}}\right),
    \end{equation}
       \begin{equation}\label{eq:ei}
        \mathsf{E}_i^* \circ \map ( \ket{x}) =  \left\{
	\begin{array}{ll}
		\map(\ket{x}) & \mbox{if } i = \sum_{j = 1}^N x_j \\
		0 & \mbox{otherwise. }
	\end{array}
\right.
    \end{equation}
\end{lemma}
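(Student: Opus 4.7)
The plan is to verify the three identities separately, with \eqref{eq:R} being the core computation, \eqref{eq:L} following by an analogous argument, and \eqref{eq:ei} being essentially immediate.

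For \eqref{eq:ei}: if $\V \in \tau_{diag}^{-1}(x)$, then by Definition \ref{def:tau} the $j$-th column of $\tau(\V)$ is nonzero exactly when $x_j = 1$, so $\dim(\V) = \sum_{j=1}^N x_j$. Hence $\mathsf{E}_i^* \ket{\V} = \ket{\V}$ when $i = \sum_j x_j$ and vanishes otherwise, and this conclusion passes through the linear combination defining $\map(\ket{x})$.

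For \eqref{eq:R}, I would first expand
\begin{equation*}
    \mathsf{R} \circ \map(\ket{x}) = \frac{1}{\sqrt{|\tau_{diag}^{-1}(x)|}} \sum_{\V \in \tau_{diag}^{-1}(x)} \sum_{\U \text{ covers } \V} \ket{\,\U}
\end{equation*}
and reorganize the double sum by $y = \tau_{diag}(\U)$. By Lemma \ref{lem:corr} applied to the cover $\U \supset \V$, $y$ must equal $x + \hat{k}$ for a unique $k$ with $x_k = 0$. Re-indexing by $\U \in \tau_{diag}^{-1}(x + \hat{k})$, the multiplicity with which $\ket{\,\U}$ appears is the number of $\V \in \tau_{diag}^{-1}(x)$ covered by $\U$; Corollary \ref{cor:nk} applied at the subspace $\U$, whose diagonal has a $1$ in position $k$, evaluates this to $q^{\sum_{j > k} x_j}$, independently of $\U$. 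The resulting inner sum collapses to $\sqrt{|\tau_{diag}^{-1}(x+\hat{k})|}\,\map(\ket{x+\hat{k}})$, so the problem reduces to evaluating the ratio $\sqrt{|\tau_{diag}^{-1}(x+\hat{k})|/|\tau_{diag}^{-1}(x)|}$ from \eqref{eq:size}. In the exponent $\sum_i \sum_{j<i} x_i(1-x_j)$, only terms with $i = k$, $j < k$ (each contributing $1 - x_j$) or $j = k$, $i > k$ (each contributing $-x_i$) are affected by flipping the $k$-th bit, so the ratio equals $q^{(k-1) - \sum_{j=1}^{k-1} x_j - \sum_{j=k+1}^N x_j}$. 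Taking its square root and multiplying by $q^{\sum_{j > k} x_j}$ then produces precisely the exponent $\frac{k-1}{2} - \frac{1}{2}\sum_{j=1}^{k-1} x_j + \frac{1}{2}\sum_{j=k+1}^N x_j$ claimed in \eqref{eq:R}.

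Equation \eqref{eq:L} is treated by the same scheme with the roles of $\V$ and $\U$ interchanged, the sum now running over $k$ with $x_k = 1$; Corollary \ref{cor:nk} is then invoked on $\V$ directly, and re-grouping by $\U$ requires counting $\V$'s covering each fixed $\U$ via the double-counting identity $|\tau_{diag}^{-1}(x)|\,q^{\sum_{j>k}x_j} = |\tau_{diag}^{-1}(x-\hat{k})|\,m_k$. Alternatively one can exploit $\mathsf{L} = \mathsf{R}^t$ together with the orthonormality of $\{\map(\ket{x})\}_{x \in V}$ — immediate from Definition \ref{def:phi}, since the fibers $\tau_{diag}^{-1}(x)$ partition $V_q$ — to read off \eqref{eq:L} from \eqref{eq:R} by transposing matrix elements. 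Either route relies on the same bookkeeping of \eqref{eq:size} under $x \mapsto x - \hat{k}$. The main obstacle is purely combinatorial: keeping careful track of how the Corollary \ref{cor:nk} multiplicity, the ratio of normalizations, and the exchange between forward and backward cover conventions combine, with the right signs, into the symmetric exponent appearing in both \eqref{eq:R} and \eqref{eq:L}.
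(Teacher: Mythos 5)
Your proposal is correct and follows essentially the same route as the paper's proof: regroup the sum over covers by the target fiber, obtain the multiplicity $q^{\sum_{j>k}x_j}$ from Corollary \ref{cor:nk} and the normalization ratio from \eqref{eq:size}, deduce \eqref{eq:L} from $\mathsf{L}=\mathsf{R}^t$, and read \eqref{eq:ei} off the dimension count. No substantive differences to report.
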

\begin{proof}
    The vector $\map(\ket{x})$ is a coherent sum over vectors $\ket{\, \U}$ with $\U \in V_q$ such that $\tau_{diag}(\U) = x$. It follows from the definition of $\mathsf{R}$ and Lemma \ref{lem:corr} that 
    \begin{equation}
        \mathsf{R} \circ \map(\ket{x}) \in \text{span}\{\ket{\V}  \ | \ \V \in V_q \ \text{s.t.} \ \tau_{diag}(\V) = x + \hat{k} \  \text{where}\  x_k = 0  \}
    \end{equation}
    Let $\V$ be a subspace such that $\tau_{diag}(\V) = x + \hat{k}$ with $k$ such that $x_k = 0$. From Corollary \ref{cor:nk}, there are $q^{\sum_{j = k+1}^N x_j}$ subspaces $\U$ covered by $\V$ such that $\tau_{diag}(\U) = x$ and thus
    \begin{equation}
        \bra{\V}( \mathsf{R} \circ \map )\ket{x} =\frac{1}{\sqrt{|\tau_{diag}^{-1}(x)|}} \sum_{\U \in \tau_{diag}^{-1}(x)} \bra{\V}\mathsf{R}\ket{\, \U} = \frac{q^{\sum_{j = k+1}^N x_j}}{\sqrt{|\tau_{diag}^{-1}(x)|}}.
    \end{equation}
    This leads to \eqref{eq:R} upon using equation \eqref{eq:size} to obtain the following ratio:
    \begin{equation}
        \frac{\sqrt{|\tau_{diag}^{-1}(x + \hat{k})|}}{\sqrt{|\tau_{diag}^{-1}(x)|}} = q^{\frac{k-1}{2}-\frac{1}{2}\sum_{j = 1}^{N}x_j}.
    \end{equation}
    Equation \eqref{eq:L}, describing the action of $\mathsf{L}$, is derived by identifying $\mathsf{L}$ as the transpose of $\mathsf{R}$. Lastly, $\mathsf{E}_i^*$ acts diagonally on $\map(\ket{x})$, as described in \eqref{eq:ei}, since all vectors $\ket{\V}$ in the coherent sum defining $\map(\ket{x})$ correspond to a subspace $\V$ of dimension $\sum_{i=1}^N x_i$. Indeed, this follows from $\tau_{diag}(\V) = x$ and the correspondence between the dimension of the subspace and the number of non-zero entries on the diagonal of its matrix representation under $\tau$.
\end{proof}

\begin{corollary}
    Let $\mathcal{I}$ be the incidence algebra of the subspace lattice $L_N(q)$. Let $\map: \mathbb{C}^{|V|} \rightarrow \mathbb{C}^{|V_q|} $ be the linear map of Definition \eqref{def:phi}. Then $\map(\mathbb{C}^{|V|})$ forms a $\mathcal{I}$-submodule of the standard module $\mathbb{C}^{|V_q|}$ of the subspace lattice $L_N(q)$. 
\end{corollary}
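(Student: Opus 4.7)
The plan is to deduce this corollary directly from Lemma \ref{lem:actRL}. Since $\map$ is linear, the image $\map(\mathbb{C}^{|V|})$ is spanned by the vectors $\{\map(\ket{x}) : x \in V\}$. To show that this image is an $\mathcal{I}$-submodule, it suffices to verify that it is stable under the action of a generating set of $\mathcal{I}$, namely the matrices $\mathsf{R}$, $\mathsf{L}$, and $\mathsf{E}_i^*$ for $i = 0, 1, \dots, N$.

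First I would invoke equation \eqref{eq:R} of Lemma \ref{lem:actRL}, which expresses $\mathsf{R}\circ \map(\ket{x})$ as a linear combination of vectors of the form $\map(\ket{x + \hat{k}})$, each of which lies in $\map(\mathbb{C}^{|V|})$ by definition. The same argument, applied to equation \eqref{eq:L}, handles $\mathsf{L}\circ \map(\ket{x})$, while equation \eqref{eq:ei} shows that $\mathsf{E}_i^*\circ \map(\ket{x})$ is either $\map(\ket{x})$ or zero, again in $\map(\mathbb{C}^{|V|})$. Extending by linearity, the subspace $\map(\mathbb{C}^{|V|})$ is stable under each generator.

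Finally, I would observe that closure under a generating set is equivalent to closure under the full algebra $\mathcal{I}$: if $M_1, M_2 \in \mathcal{I}$ both preserve $\map(\mathbb{C}^{|V|})$, then so do $M_1 M_2$ and any linear combination of $M_1$ and $M_2$. Applying this inductively to arbitrary words in $\mathsf{R}$, $\mathsf{L}$, $\mathsf{E}_i^*$ yields the conclusion. There is essentially no obstacle here, as the entire content of the statement has been packaged into the preceding lemma; the corollary is the natural structural repackaging of Lemma \ref{lem:actRL}.
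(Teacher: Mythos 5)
Your proof is correct and follows exactly the route the paper takes: invoke Lemma \ref{lem:actRL} to see that each generator $\mathsf{R}$, $\mathsf{L}$, $\mathsf{E}_i^*$ sends the spanning vectors $\map(\ket{x})$ back into $\map(\mathbb{C}^{|V|})$, then extend by linearity and to arbitrary products of generators. The paper's own proof is a one-line version of this same argument.
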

\begin{proof}
   From the previous lemma, one observes that the action of the generators of $\mathcal{I}$ is closed on the basis vector $\map(\ket{x})$ of $\map(\mathbb{C}^{|V|})$. The result follows.
\end{proof}

We now show the main result of this subsection, namely that the quotient graph of a weighted subspace lattice graph $L_N(q)$ (with respect to the subsets of vertices defined from the preimage of $\tau_{diag}$) is a the weighted hypercube.

\begin{proposition}\label{prop:wc}
    Let $\map: \mathbb{C}^{|V|} \rightarrow \mathbb{C}^{|V_q|} $ be the linear injective map of Definition \eqref{def:phi}. Let $\mathsf{Y}$ be the element of the incidence algebra of $L_N(q)$ defined in \eqref{def:Ysf}. Let $A_q$ be the adjacency matrix of the weighted cube defined in \eqref{A_q_def}. Then
    \begin{equation}\label{eq:reswc}
       \mathsf{Y} \circ \map =  \map \circ \left( \pi^{-1} \circ A_{1/\sqrt{q}}\circ  \pi \right)
    \end{equation}
    with $\pi : \mathbb{C}^{|V|} \rightarrow \mathbb{C}^{|V|}$ an automorphism of the hypercube whose action amounts to reversing the sequences $x \in \{0,1\}^N$, i.e.
    \begin{equation}
        \pi \ket{x_1,x_2, \dots, x_N} = \ket{x_N,x_{N-1}, \dots, x_1}.
    \end{equation}
\end{proposition}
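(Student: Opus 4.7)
The identity \eqref{eq:reswc} is a pointwise equality on basis vectors $\ket{x} \in \mathbb{C}^{|V|}$, so the plan is to expand both sides explicitly and match coefficients of $\map(\ket{x\oplus\hat{k}})$, one value of $k \in \{1,\dots,N\}$ at a time.

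\emph{Step 1 (left-hand side).} Using Lemma \ref{lem:actRL}, observe first that $\mathsf{E}_i^*\circ\map(\ket{x}) = \delta_{i,|x|}\,\map(\ket{x})$ with $|x|=\sum_j x_j$, so by \eqref{def:K} one has $\mathsf{K}^{1/2}\circ\map(\ket{x}) = q^{N/4-|x|/2}\,\map(\ket{x})$. Plugging this into the definition \eqref{def:Ysf} of $\mathsf{Y}$ and applying the formulas \eqref{eq:R}--\eqref{eq:L} for $\mathsf{R}$ and $\mathsf{L}$ yields
\begin{equation*}
\mathsf{Y}\circ\map(\ket{x})
= q^{\tfrac{1}{4}-\tfrac{|x|}{2}}\!\!\sum_{k\,:\,x_k=1}\!\! q^{\tfrac{1}{4}+\tfrac{k-1}{2}-\tfrac{1}{2}S_<+\tfrac{1}{2}S_>}\map(\ket{x-\hat{k}})
+ q^{\tfrac{1}{4}-\tfrac{|x|}{2}}\!\!\sum_{k\,:\,x_k=0}\!\! q^{-\tfrac{1}{4}+\tfrac{k-1}{2}-\tfrac{1}{2}S_<+\tfrac{1}{2}S_>}\map(\ket{x+\hat{k}}),
\end{equation*}
where $S_<=\sum_{j<k}x_j$ and $S_>=\sum_{j>k}x_j$.

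\emph{Step 2 (right-hand side).} Write $\tilde x = \pi(x)$, so $\tilde x_i = x_{N+1-i}$. From \eqref{def:aq},
\begin{equation*}
A_{1/\sqrt{q}}\ket{\tilde x} = \sum_{i=1}^{N} q^{-\tfrac{1}{2}(i-N+2\sum_{j>i}\tilde x_j)}\,\ket{\tilde x\oplus\hat\imath}.
\end{equation*}
Apply $\pi^{-1}$ and change the summation index to $k = N+1-i$. A direct calculation gives $\sum_{j>i}\tilde x_j = S_<(k,x)$ and $i-N = 1-k$, so
\begin{equation*}
\pi^{-1}\circ A_{1/\sqrt{q}}\circ\pi\,\ket{x} \;=\; \sum_{k=1}^{N} q^{\tfrac{k-1}{2}-S_<(k,x)}\,\ket{x\oplus\hat{k}},
\end{equation*}
and then composing with $\map$ produces the desired expansion in the $\map(\ket{x\oplus\hat{k}})$.

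\emph{Step 3 (matching).} It remains to check, in each of the two cases $x_k=1$ and $x_k=0$, that the $k$-th coefficient of $\mathsf{Y}\circ\map(\ket{x})$ coincides with $q^{(k-1)/2-S_<}$. Using the identity $|x| = S_< + x_k + S_>$, a straightforward cancellation gives in both cases an exponent equal to $\tfrac{k-1}{2}-S_<$, which concludes the proof.

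\emph{Anticipated difficulty.} Nothing conceptually hard remains once Lemma \ref{lem:actRL} is in hand; the only place where a mistake could easily be made is the bookkeeping in Step 2 (the re-indexing $k=N+1-i$ that turns sums $\sum_{j>i}$ over suffixes of $\tilde x$ into sums $\sum_{j<k}$ over prefixes of $x$) and the subsequent exponent arithmetic in Step 3. Carrying these out carefully — separating the cases $x_k=0$ and $x_k=1$ so that the $q^{\pm 1/4}$ factors from $q^{1/4}\mathsf{L}$ and $q^{-1/4}\mathsf{R}$ can be absorbed into the simplification of $|x|/2$ — is the entire content of the argument.
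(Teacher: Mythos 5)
Your proposal is correct and follows essentially the same route as the paper: both sides of \eqref{eq:reswc} are expanded on the basis vectors $\map(\ket{x\pm\hat{k}})$ using Lemma \ref{lem:actRL} (for $\mathsf{Y}$, via the eigenvalue $q^{N/4-|x|/2}$ of $\mathsf{K}^{1/2}$) and the explicit entries of $A_{1/\sqrt{q}}$ conjugated by the reversal $\pi$, after which the exponents are seen to agree with $\tfrac{k-1}{2}-\sum_{j<k}x_j$ in both cases $x_k=0$ and $x_k=1$. The paper phrases this as a comparison of matrix entries $\bra{y}(\cdot)\ket{x}$ rather than of coefficients in an expansion, but the computation is identical.
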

\begin{proof}
    Let $x$ and $y$ be two sequences in $\{0,1\}^N$. Recall that $\mathsf{Y} =  q^{\frac{1-N}{4}}( q^{1/4} \mathsf{L} + q^{-1/4} \mathsf{R}) \mathsf{K}^{1/2}$. From the action of $\mathsf{R}$, $\mathsf{L}$ and $\mathsf{E}_i^*$ given in Lemma \eqref{lem:actRL}, we find that
\begin{equation}\label{eq:plhs}
      \bra{y}  \map^{-1}\circ \left(  ( q^{1/4} \mathsf{L} + q^{-1/4} \mathsf{R}) \mathsf{K}^{1/2} \right) \circ \map \ket{x}=\left\{ \begin{array}{cc}
       q^{\frac{N-1}{4} + \frac{k-1}{2} - \sum_{j = 1}^{k-1} x_j } & \text{if } \partial(x,y) = 1, \ x_{k} \neq y_{k} \\
        0 & \text{otherwise,} 
    \end{array}\right.
\end{equation}
where $\map^{-1}$ is the inverse of $\map$ defined on the domain $\map(\mathbb{C}^{|V_q|})$. Furthermore, equation \eqref{def:aq} gives the entries of $A_{1/\sqrt{q}}$:
\begin{equation}
    \bra{y}A_{1/\sqrt{q}}\ket{x}=\left\{ \begin{array}{cc}
          q^{\frac{N-k}{2}  - \sum_{j =k+1}^{N} x_j }& \text{if } \partial(x,y) = 1, \ x_{k} \neq y_{k} \\
        0 & \text{otherwise.} 
    \end{array}\right.
\end{equation}
Since $x_i \rightarrow x_{N-i+1}$ under $\pi$, we have
\begin{equation}\label{eq:prhs}
    \bra{y} \pi^{-1} \circ A_{1/\sqrt{q}}\circ \pi \ket{x}=\left\{ \begin{array}{cc}
          q^{\frac{k-1}{2}  - \sum_{j =1}^{k-1} x_j}& \text{if } \partial(x,y) = 1, \ x_{k} \neq y_{k} \\
        0 & \text{otherwise.} 
    \end{array}\right.
\end{equation}
The correspondence between equations \eqref{eq:prhs} and \eqref{eq:plhs} yields the results.
    
\end{proof}

The next subsection demonstrates that the weighted subspace lattice with adjacency matrix $ \mathsf{Y} =q^{\frac{1-N}{4}}( q^{1/4} \mathsf{L} + q^{-1/4} \mathsf{R}) \mathsf{K}^{1/2}$ also corresponds to the quotient graph of dual polar graphs of type $C_d(q)$.

\subsection{Relation to dual polar spaces}
It was shown in \cite{Bernard_dual} that weighted subspace lattice graphs can be obtained as quotient graphs of the symplectic dual polar graph, i.e. the dual polar graph of type $C_d(q)$. Specifically, there exists a change of basis that decomposes the adjacency matrix of the symplectic dual polar graph into a direct sum of adjacency matrices of weighted subspace lattice graphs. Here, we recall some key details regarding this result.

Let $\text{Sym}_d$ denote the set of $d \times d$ symmetric matrices with entries in $\mathbb{F}_q$. We define the type $\epsilon$ of a matrix $S \in \text{Sym}_d$ as follows:
\begin{definition}
    \normalfont The \textit{type} $\epsilon$ of a symmetric matrix $S \in \text{Sym}_d$ is defined as:
    \begin{equation}
        \epsilon = \begin{cases}
            1 & \text{if rank($S$) is 0 \textbf{or} is even and } (-1)^{\frac{\text{rank}(S)}{2}} \text{det}(Q) \text{ is a square in } \mathbb{F}_q, \\
            -1 & \text{if rank($S$) is even and } (-1)^{\frac{\text{rank}(S)}{2}} \text{det}(Q) \text{ is a non-square in } \mathbb{F}_q, \\
            0 & \text{if rank($S$) is odd,}
        \end{cases}
    \end{equation}
where $Q$ is a rank($S)\times$ rank($S$) matrix such that, for a matrix $\Upsilon\in GL(d,q)$, we have
\begin{equation}
    \Upsilon^tS\Upsilon=\begin{pmatrix}
    0&0\\0&Q
\end{pmatrix}.
\end{equation}
\end{definition}

\begin{theorem} (Reformulation of Theorem 7.1 of \cite{Bernard_dual})
    Let $\A_1$ denote the adjacency matrix of the dual polar graph of type $C_d(q)$, defined on the set $X$ of maximal isotropic subspaces of the vector space $\mathbb{F}_q^{2d}$ equipped with a non-degenerate symplectic form $\mathfrak{B}$. The standard module $\mathbb{C}^{|X|}$ of this graph decomposes as a direct sum of submodules $W(S)$ invariant under the action of $\A_1$, labeled by symmetric matrices $S \in \text{Sym}_d$,
    \begin{equation}\label{eq:dec}
    \mathbb{C}^{|X|} = \bigoplus_{S \in \text{Sym}_d} W(S), \quad \A_1 W(S) \subseteq W(S).
\end{equation}
Furthermore, for each submodule $W(S)$ there exists a linear bijective map $\phi: \mathbb{C}^{|V_q|}  \rightarrow W(S)$ between the subspaces $W(S)$ and the standard module of the subspace lattice $L_{d-\text{rank}(S)}(q)$ such that
\begin{equation}\label{eq:rws}
    \A_1|_{W(S)} = \phi^{-1} \circ \left( \epsilon q^{d/2}{\mathsf{K}} - 1 + q^{d/2} \mathsf{Y}\right) \circ \phi
\end{equation}
with $\epsilon$ the type of $S$ and $\mathsf{Y}$, $\mathsf{K}$ the elements of the incidence algebra of $L_
{d-\text{rank}(S)}(q)$ as defined in \eqref{def:Ysf} and \eqref{def:K}.
\end{theorem}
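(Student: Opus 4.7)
The plan is to build the decomposition \eqref{eq:dec} by parameterizing Lagrangians $\V\in X$ by two pieces of data: a discrete ``type'' invariant (a symmetric matrix $S\in\mathrm{Sym}_d$) and a ``continuous'' parameter (a subspace of a reference Lagrangian), and then to compute the action of $\A_1$ in the resulting basis.

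For the parameterization, I would fix a symplectic basis $\{e_1,\dots,e_d,f_1,\dots,f_d\}$ with $L_0=\mathrm{span}(e_1,\dots,e_d)$ a reference Lagrangian. Any maximal isotropic $\V$ admits a canonical echelon form, in the spirit of Definition \ref{def:tau}: the pivot positions determine a subset $I\subseteq\{1,\dots,d\}$ encoding $\V\cap L_0$, while the remaining free entries split into two blocks. One block varies over the corresponding subspace lattice $L_{d-\mathrm{rank}(S)}(q)$ (the ``$L_0$-directions''), and the other records the coefficients of the $f_j$ components; the isotropy condition $\mathfrak{B}(v,w)=0$ forces the latter block to assemble into a symmetric matrix $S\in\mathrm{Sym}_d$. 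One then defines $W(S)$ to be the span of $\ket{\V}$ as the subspace-lattice data varies with $S$ held fixed, and obtains the natural bijection $\phi:\mathbb{C}^{|V_q|}\to W(S)$.

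The invariance $\A_1 W(S)\subseteq W(S)$ follows because an edge $\V\sim\U$ (i.e.\ $\dim(\V\cap\U)=d-1$) corresponds to a rank-one modification that either changes the lattice anchor $\V\cap L_0$ by a covering relation in $L_{d-\mathrm{rank}(S)}(q)$ or fixes the anchor while rearranging the symmetric data in a $S$-preserving way. Reading this through Lemma \ref{lem:actRL}, the anchor-changing edges contribute exactly $\mathsf{R}$ and $\mathsf{L}$ pieces that, once properly $q$-weighted by the projector structure of $\mathsf{K}$, assemble into $q^{d/2}\mathsf{Y}$ via the identification \eqref{def:Ysf}; the anchor-preserving edges produce the diagonal contribution $\epsilon q^{d/2}\mathsf{K}-1$, with the sign $\epsilon$ arising from the count of isotropic rank-one modifications, which dichotomizes between hyperbolic and anisotropic classes in even rank and vanishes in odd rank.

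The chief obstacle is the precise weight counting in the last step: for each $\V\in W(S)$ one must enumerate how many neighbours lie in $W(S)$ versus other summands, split the count between ``raising/lowering'' and ``diagonal'' contributions, and extract the coefficients $\epsilon q^{d/2}$ and $q^{d/2}$. This is exactly the Gaussian-coefficient orbit counting carried out in the proof of Theorem 7.1 of \cite{Bernard_dual}; the content of the present reformulation is to repackage those counts into the compact $U_{\sqrt{q}}(\mathfrak{su}(2))$-style expression on the right-hand side of \eqref{eq:rws}, using the identifications \eqref{def:Ysf} and \eqref{def:K} and matching normalizations so that $\phi$ becomes an isometric intertwiner.
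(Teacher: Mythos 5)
Your proposal goes wrong at the very first structural step: you define $W(S)$ as ``the span of $\ket{\V}$ as the subspace-lattice data varies with $S$ held fixed,'' i.e.\ as a coordinate subspace spanned by a subset of the standard basis vectors. No such decomposition can satisfy $\A_1 W(S)\subseteq W(S)$, because the dual polar graph is connected: a direct sum of $\A_1$-invariant coordinate subspaces would exhibit the graph as disconnected. The failure is already visible for $d=1$, where the graph is $K_{q+1}$ and your $W(s)$ for $s\neq 0$ would be the span of a single vertex vector, which is clearly not invariant under $\A_1=J-I$. The paper (following Theorem 7.1 of \cite{Bernard_dual}) instead takes $W(S)$ to be an \emph{eigenspace of an abelian subgroup} $H\cong(\mathbb{F}_q,+)^{d(d+1)/2}$ of graph automorphisms stabilizing a reference Lagrangian; the labels $S\in\mathrm{Sym}_d$ index the characters of $H$, not echelon-form data, and the spanning vectors of $W(S)$ are character sums (a finite Fourier transform in your ``symmetric block'' coordinate) over the $H$-orbits, with the anchor $\V\cap L_0$ fixed. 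This also repairs a mismatch in your parameterization: the isotropy condition produces a symmetric form on a space of dimension $d-\dim(\V\cap L_0)$, which varies with the anchor, whereas the character group of $H$ is uniformly $\mathrm{Sym}_d$.

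This is not merely a presentational difference: the right-hand side of \eqref{eq:rws} contains the term $\epsilon q^{d/2}\mathsf{K}-1$, which takes negative and irrational values, while the restriction of a $(0,1)$-matrix to a coordinate subspace is again a $(0,1)$-matrix in the induced basis. The sign $\epsilon$ and the factor $q^{d/2}$ arise from evaluating quadratic Gauss sums attached to the character $\chi_S$ (whence the dependence on whether $(-1)^{\mathrm{rank}(S)/2}\det Q$ is a square, and the vanishing in odd rank), not from ``the count of isotropic rank-one modifications'' as you suggest. Your final step --- matching anchor-changing edges to $\mathsf{R},\mathsf{L}$ weighted into $q^{d/2}\mathsf{Y}$ via Lemma \ref{lem:actRL}-type counting, and anchor-preserving edges to a diagonal term --- is the right spirit for the second half of the argument, but it must be carried out on the character-sum basis of the $H$-eigenspaces (using the semidirect-product structure of the stabilizer and the attendant projective representation of $GL(d,q)$), not on the standard basis.
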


\begin{proof}
    The detailed proof can be found in \cite{Bernard_dual}; here, we summarize the key ideas. The group of Lie type $Sp(2d, q)$ preserves the form $\mathfrak{B}$ and naturally acts on the vertices of the symplectic dual polar graph, forming a subgroup of its automorphism group. Since graph automorphisms are linked to the commutant of the adjacency algebra, it follows that the adjacency matrix has a closed action on the eigenspaces of an abelian subgroup of automorphisms. The decomposition \eqref{eq:dec} arises from this principle, applied to an abelian subgroup $H \cong (\mathbb{F}_q, +)^{d(d+1)/2}$, which stabilizes a subspace $x_0 \in X$.

The derivation of equation \eqref{eq:rws}, which describes the restriction of the adjacency matrix $\A_1$ to each eigenspace of the abelian subgroup $H$, proceeds in two steps. First, by decomposing the stabilizer of $x_0$ in $Sp(2d, q)$ as a semi-direct product of $H$ and $GL(d, q)$, and using a projective representation approach, one identifies a basis of $W(S)$ in bijection with the set of subspaces of $\mathbb{F}_q^{d - \text{rank}(S)}$. Second, through combinatorial arguments, the action of $\A_1$ on this basis is matched with the action of the generators of the incidence algebra of $L_{d - \text{rank}(S)}(q)$, leading to the desired result.
\end{proof}

Since Proposition \ref{prop:wc} established that the weighted subspace lattice 
$L_N(q)$, with adjacency matrix $\mathsf{Y} = q^{\frac{1-N}{4}} ( q^{1/4} \mathsf{L} + q^{-1/4} \mathsf{R}) \mathsf{K}^{1/2}$, has the weighted cube as its quotient graph, it follows from the previous theorem that the adjacency matrix of the symplectic dual polar graph restricted to a subspace $W(S)$ with $\epsilon = 0$ can be further restricted to a subspace where it acts as an affine transformation of the adjacency matrix of the weighted cube.

\section{Outlook}

This work established a connection between two different approaches to the $q$-deformation of hypercubes: the weighted cube and the symplectic dual polar graph. Specifically, it was shown that the adjacency matrix of the symplectic dual polar graph decomposes into a direct sum of adjacency matrices of weighted subspace lattices, which in turn have the weighted hypercube as a quotient graph. This clarifies the relation between the various definitions of the $q$-hypercube found in the literature and highlights the central role played by a twisted primitive element of $U_q(\mathfrak{su}(2))$ in each approach.

The adjacency matrix of the weighted cube appears in various physical contexts. It is closely related to the $q$-Dicke states studied in quantum information theory, and whose quantum entanglement has been extensively studied in \cite{li2015entanglement, raveh2024q}. The matrix $A_q$, as the representation of an element of $U_q(\mathfrak{su}(2))$ in the $N$-fold tensor product of the fundamental representation, also happens to be a generator of the symmetry algebra of certain spin chains with integrable boundary conditions \cite{kulish1991general, pasquier1990common}. Given the connection between $A_q$ and the adjacency matrix of symplectic the dual polar graph established in this paper, it becomes particularly intriguing to explore how the combinatorial properties of these association schemes can be applied to these problems. In future work, we also aim to extend the study of entanglement in free fermion systems on hypercubes \cite{Bernard_Hamming} to the various $q$-analogs considered here.

\section*{Acknowledgments}

The authors are grateful to Wolter Groenevelt, Erik Koelink, Hjalmar Rosengren and Jasper V. Stokman for their invitation to contribute to a special volume for Tom Koornwinder. PAB holds an Alexander-Graham-Bell scholarship from the Natural Sciences and Engineering Research Council (NSERC) of Canada. EP held a NSERC Undergraduate Student Research Award (USRA) in the course of this project. LV is funded in part through a discovery grant from NSERC.

\appendix
\section{Derivation of Lemma \ref{lem:corr}} \label{app:A}

First, we introduce two lemmas concerning the properties of the matrices in the image of $\tau$, which will be useful in the derivation of Lemma \ref{lem:corr}.

\begin{lemma}\label{lem:sub}
      Let $\V$ and $\U$ be subspaces of the vector space $\mathbb{F}_q^N$, with $\boldsymbol{v} = \tau(\V)$ and $\boldsymbol{u} = \tau(\U)$ their corresponding upper triangular matrices under the map $\tau$. If $\U \subseteq \V$, then
     \begin{equation}
         \boldsymbol{u}_{ii} = 1 \quad \Rightarrow \quad  \boldsymbol{v}_{ii} = 1.
     \end{equation}
\end{lemma}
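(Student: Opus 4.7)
The plan is to characterize the set $D(\V) := \{i : \boldsymbol{v}_{ii} = 1\}$ intrinsically in terms of the subspace $\V$ using the standard flag, and then deduce the implication from the obvious compatibility of intersections with inclusion.

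First, I would observe that the non-zero columns of $\boldsymbol{v} = \tau(\V)$ are precisely those indexed by $D(\V)$: if $\boldsymbol{v}_{jj} = 0$, the second constraint in Definition \ref{def:tau} forces $\boldsymbol{v}_{ij} = 0$ for every $i$, so column $j$ vanishes identically. The remaining columns each carry the pivot entry $\boldsymbol{v}_{kk} = 1$ at a distinct row $k \in D(\V)$, hence are linearly independent, and they span $\V$; in particular $\dim(\V) = |D(\V)|$.

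Next, I would introduce the standard flag $F_j = \text{span}\{e_1, e_2, \ldots, e_j\}$ for $j = 0, 1, \ldots, N$ and establish the identity
\begin{equation}
    \dim(\V \cap F_j) = |D(\V) \cap \{1, 2, \ldots, j\}|.
\end{equation}
Since $\boldsymbol{v}$ is upper triangular, every pivot column $\boldsymbol{v}_k$ with $k \in D(\V) \cap \{1, \ldots, j\}$ belongs to $F_j$; conversely, writing a vector of $\V \cap F_j$ as a combination of the pivot columns and inspecting the $e_k$-coordinate for each $k > j$ (where, among the pivot columns, only $\boldsymbol{v}_k$ itself contributes, via its pivot $1$) shows that every pivot column with pivot index exceeding $j$ must appear with zero coefficient. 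It follows that $j \in D(\V)$ if and only if $\dim(\V \cap F_j) > \dim(\V \cap F_{j-1})$, and the same characterization holds for $\U$.

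Finally, suppose $\boldsymbol{u}_{ii} = 1$. The column $\boldsymbol{u}_i$ then lies in $\U \cap F_i$ but not in $F_{i-1}$, and since $\U \subseteq \V$ it also witnesses $\dim(\V \cap F_i) > \dim(\V \cap F_{i-1})$, so $i \in D(\V)$ and $\boldsymbol{v}_{ii} = 1$. No real obstacle is expected: once the flag characterization of $D(\V)$ is in place the implication is immediate, and the linear-algebra bookkeeping is already packaged into the uniqueness property of the reduced form $\tau(\V)$.
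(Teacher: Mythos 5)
Your proof is correct, but it takes a different route from the paper's. The paper argues directly on coordinates: since $\U\subseteq\V$, each column of $\boldsymbol{u}$ is a linear combination of the columns of $\boldsymbol{v}$, and the structural constraints of Definition \ref{def:tau} dictate which coordinates of such a combination can be non-zero (the relevant fact is isolated in the paper's Lemma \ref{lem:col}: if $\boldsymbol{v}_{ii}=1$ then the $i$-th coordinate of any combination of the columns of $\boldsymbol{v}$ equals the coefficient of $\boldsymbol{v}_i$). You instead characterize the diagonal of $\tau(\V)$ intrinsically as the jump set of the filtration of $\V$ by the standard flag $F_j=\mathrm{span}\{e_1,\dots,e_j\}$, via $\dim(\V\cap F_j)=|\{k\le j:\boldsymbol{v}_{kk}=1\}|$, after which monotonicity under inclusion is automatic. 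Your route is slightly longer but arguably more robust: the paper's key sentence (``a column of $\boldsymbol{v}$ can have non-zero $i$-th entry only if $\boldsymbol{v}_{ii}=1$'') is, read literally, false for off-diagonal entries sitting in non-pivot rows (e.g.\ $\boldsymbol{v}_{12}$ in the displayed $N=5$ example, where $\boldsymbol{v}_{11}=0$); the correct invariant, which your flag argument tracks, is that the \emph{last} non-zero coordinate of a vector of $\V$ must sit at a position $i$ with $\boldsymbol{v}_{ii}=1$. Both arguments are elementary and of comparable depth; yours has the side benefit of exhibiting the diagonal of $\tau(\V)$ as a basis-independent invariant of $\V$.
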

\begin{proof}
    If $\U$ is a subspace of $\V$, then each column of $\boldsymbol{u} = \tau(\U)$ lies in the span of the columns of $\boldsymbol{v} =\tau(\V)$. By definition of the map $\tau$, a column of $\boldsymbol{v}$ can have non-zero $i$-th entry only if $\boldsymbol{v}_{ii} = 1$. The same condition applies to any linear combination of these columns, leading to the desired result.
\end{proof}

\begin{lemma}\label{lem:col}
     Let $\V$ be a subspace of the vector space $\mathbb{F}_q^N$, with $\boldsymbol{v} = \tau(\V)$ the corresponding upper triangular matrix under the map $\tau$. Let $\boldsymbol{v}_{j}$ denote the $j$-th column of $\boldsymbol{v}$. For any given coefficients $c_j \in \mathbb{F}_q$, we have
     \begin{equation}
       \boldsymbol{v}_{jj} = 1 \quad \Rightarrow \quad   \left(\sum_{i = 1}^N c_{i} \boldsymbol{v}_{i}\right)_j = c_{j}
     \end{equation}
\end{lemma}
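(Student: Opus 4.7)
The plan is to read off the conclusion from the structural constraints that the definition of $\tau$ places on the row of $\boldsymbol{v}$ indexed by $j$. Specifically, Definition \ref{def:tau} imposes the implication $\boldsymbol{v}_{ij} \neq 0 \Rightarrow \boldsymbol{v}_{ii} \neq 1$ for all $i,j$. Reading this contrapositively with $i$ replaced by $j$, one has: whenever $\boldsymbol{v}_{jj} = 1$, every off-diagonal entry $\boldsymbol{v}_{ji}$ with $i > j$ must vanish. Combined with upper-triangularity, which forces $\boldsymbol{v}_{ji} = 0$ for $i < j$, this means the $j$-th row of $\boldsymbol{v}$ consists entirely of zeros except for the single diagonal entry $\boldsymbol{v}_{jj} = 1$.

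Once this is established, the computation is immediate. I would simply expand
\begin{equation*}
\left(\sum_{i=1}^N c_i \boldsymbol{v}_i\right)_j \;=\; \sum_{i=1}^N c_i\, \boldsymbol{v}_{ji},
\end{equation*}
and then invoke the preceding observation to kill every term with $i \neq j$, leaving only $c_j\, \boldsymbol{v}_{jj} = c_j$, as required. There is no substantial obstacle here; the lemma is really a direct unpacking of the normal-form conditions imposed on matrices in the image of $\tau$, and its purpose is only to isolate this bookkeeping so that it can be cited cleanly inside the proof of Lemma \ref{lem:corr}, where one needs to compare the $k$-th row of two triangular representatives.
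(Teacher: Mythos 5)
Your proof is correct and follows essentially the same route as the paper: both extract from Definition \ref{def:tau} (via the contrapositive of $\boldsymbol{v}_{ij}\neq 0 \Rightarrow \boldsymbol{v}_{ii}\neq 1$, together with upper-triangularity) that when $\boldsymbol{v}_{jj}=1$ the $j$-th column is the only one with a nonzero entry in row $j$, and then read off the $j$-th coordinate of the linear combination. No gap; your version just makes the row-wise bookkeeping slightly more explicit than the paper's.
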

\begin{proof}
   Due to the structure of the upper triangular matrices in the image of $\tau$, we have $\boldsymbol{v}_{jj} = 1$ if and only if $\boldsymbol{v}_j$ is the only column with a non-zero entry in the $j$-th position. Consequently, the $j$-th entry of any linear combination of these columns will solely depend on $\boldsymbol{v}_j$. This proves the result.
\end{proof}

\noindent We are now ready for the derivation of Lemma \ref{lem:corr}.

\begin{proof} (Lemma \ref{lem:corr})
A subspace $\V$ is said to cover $\U$ if and only if $\U \subset \V$ and $\dim(\V) = \dim(\U) + 1$. Notice that the number of non-zero entries on the diagonals of $\boldsymbol{u}$ and $\boldsymbol{v}$ corresponds to the dimensions of $\U$ and $\V$, respectively. Therefore, if condition (i) holds, it directly follows that $\dim(\V) = \dim(\U) + 1$. Furthermore, it is straightforward to verify that conditions (ii) and (iii) imply that the non-zero columns of $\boldsymbol{u}$ are linear combinations of those of $\boldsymbol{v}$, which ensures that $\U \subset \V$. Thus, the combination of conditions (i), (ii) and (iii) guarantees that $\V$ covers $\U$.

Now let us assume that $\V$ covers $\U$. It follows from $\U \subset \V$ and Lemma \ref{lem:sub} that $\boldsymbol{u}_{ii} = 1 \Rightarrow \boldsymbol{v}_{ii} = 1 $. Since the number of non-zero entries on the diagonal corresponds to the dimension of the subspace and $\dim(\V) = \dim(\U) + 1$, the diagonal of $\boldsymbol{v}$ has one non-zero entry more than $\boldsymbol{u}$. Condition (i) follows. Next, we have to show that $\U \subset \V$ implies (ii) and (iii). For all $j \neq k$ such that $\boldsymbol{u}_{jj} = 0$, we have  $\boldsymbol{v}_{jj} = 0$ and $\boldsymbol{u}_j = \boldsymbol{v}_j = 0$, so that both (ii) and (iii) are satisfied. For $j$ with $\boldsymbol{u}_{jj} \neq 0$, it follows from $\U \subset \V$ that there exist coefficients $c_{ij}$ such that
\begin{equation}\label{eq:p1}
    \boldsymbol{u}_j = \sum_{i = 1}^N c_{ij} \boldsymbol{v}_i, \quad c_{ij} \in \mathbb{F}_q.
\end{equation}
For $i$ such that $\boldsymbol{v}_{ii} = 0$, the structure of $\boldsymbol{v}$ implies that the column $\boldsymbol{v}_{i}$ is the zero vector and we can fix without loss of generality $c_{ij} =0$. For $i \neq k$ such that $\boldsymbol{v}_{ii} = 1$, Lemma \ref{lem:col} implies that $i$-th entry of l.h.s. of \eqref{eq:p1} is $c_{ij}$. We also have that $i \neq k$ and $\boldsymbol{v}_{ii} = 1$ implies $\boldsymbol{u}_{ii} = 1$, so that the $i$-th entry of the r.h.s is $\boldsymbol{u}_{ij}  =0$. The equality of \eqref{eq:p1} then implies $ c_{ij} = 0$ for all $i \notin \{j, k\}$, leading to
\begin{equation}
    \boldsymbol{u}_j = c_{jj} \boldsymbol{v}_j + c_{kj} \boldsymbol{v}_k. 
\end{equation}
Since $\boldsymbol{u}_{jj} = \boldsymbol{v}_{jj} = 1$ and $\boldsymbol{v}_{jk} = 0$, it follows that $c_{jj} = 1$. This leads to condition (iii). In the case $j < k$, the upper triangular structure imposes $\boldsymbol{u}_{kj} = \boldsymbol{v}_{kj} = 0$ and so $c_{kj} =0$, leading to (ii).

\end{proof}

\bibliographystyle{plain}

\end{document}